\newtheorem{theorem}{Theorem}[section]
\newtheorem{lemma}[theorem]{Lemma}
\newtheorem{definition}[theorem]{Definition}
\newtheorem{corollary}[theorem]{Corollary}
\newtheorem{remark}[theorem]{Remark}
\newtheorem{example}[theorem]{Example}
\begin{document}

{\bf On the consistency of cell division processes} 

\vspace{0.5cm}

{Werner Nagel} 
\footnote{Friedrich-Schiller-Universit\"at Jena, Fakult\"at f\"ur
Mathematik und Informatik, D-07737 Jena, Germany. werner.nagel@uni-jena.de} 
and  {Eike Biehler}

{\bf Abstract}\\
For a class of cell division processes, generating tessellations of the Euclidean space $\mathbb{R}^d$, spatial consistency is investigated. This addresses the problem whether the distribution of these tessellations, restricted to a bounded set $V$, depends on the choice of a larger region $W\supset V$ where the construction of the cell division process is performed. This can also be understood as the problem of boundary effects in the cell division procedure. In \cite{Nagel:2005} it was shown that the STIT tessellations are spatially consistent
There were hints that the STIT tessellation process might be the only translation-invariant cell division process that has such a consistency property. In the present paper it is shown that, within a reasonable wide class of cell division processes, the STIT tessellations are the only ones that are consistent.
\\

Keywords: {stochastic geometry; random tessellation; iteration/nesting of tessellations; stability of distribution; STIT tessellation
} 

AMS: {60D05}

\section{Introduction}
In Stochastic Geometry the well established models for random tessellations of the Euclidean space $\mathbb{R}^d$, $d \geq 2,$ are the Poisson-Voronoi tesselations and the Poisson hyperplane tessellations, see \cite{SKM, SW}. Moreover, there are many suggestions in the literature to construct tessellations by sequential division of the cells, i.e. of the polytopes which constitute a tessellation. A systematization including many of such constructions was recently given in \cite{Cowan}. Usually, these tessellations are constructed in a bounded window $W\subset \mathbb{R}^d$. This yields a key problem for this kind of constructions: Are the tessellations consistent in space, i.e. does the distribution of the resulting tessellation depend on the window where the construction is performed?
More precisely, if $Y(W,t)$ and $Y(V,t)$ are the random tessellations generated by cell division until time $t>0$ in bounded windows $V\subset W$, are then $Y(W,t)\cap V$ and $Y(V,t)$ identically distributed? The consistency of a model implies the existence of a tessellation $Y(t)$ of the whole space $\mathbb{R}^d$ such that the restrictions $Y(t)\cap W$ have the same law as  $Y(W,t)$.
In the present paper we consider a certain class of cell division processes, and the main result is that in this class the STIT tessellations (introduced in \cite{Nagel:2005}) are the only ones which are consistent.

\section{Random tessellations and consistency}

For the $d$-dimensional Euclidean space $\mathbb{R}^d$, $d \geq 2,$ denote by $\partial B$, $int (B)$ and  $cl(B)$ the boundary, the interior and the topological closure, respectively, of a set $B\subset {\mathbb R}^d$.
Let  $[\mathcal{H},\mathfrak{H}]$ denote the measurable space of  hyperplanes in $\mathbb{R}^d$ with its Borel $\sigma$-algebra $\mathfrak{H}$ w.r.t. the topology of of closed convergence for closed subsets of $\mathbb{R}^d$, see \cite{SW}. For a set $B\subset \mathbb{R}^d$ we write $[B]=\{ h\in {\cal H}:\, B\cap h\not= \emptyset \}$. 
Further, let $\mathfrak{P}$ denote the set of all polytopes (i.e. convex hulls of finite point sets)  with interior points in $\mathbb{R}^d$.
A set $\{ C_1,C_2,\ldots \}$ with $C_i\in \mathfrak{P}$ is a tessellation if $\bigcup_{i=1}^\infty C_i = {\mathbb R}^d$ and  $int (C_i)\cap int (C_j)=\emptyset$ for $i\not= j$. Moreover, a locally finiteness condition must be satisfied,  $\# \{ i:C_i \cap B \not= \emptyset \} <\infty$ for all bounded $B\subset {\mathbb R}^d$, i.e. the number of polytopes intersecting a bounded set is finite.

A tessellation can be considered as a set $\{ C_1,C_2,\ldots \}$ of polytopes -- referred to as the cells -- as well as a closed set  $\bigcup_{i=1}^\infty \partial C_i \subset {\mathbb R}^d$, the union of the cell boundaries. There is an obvious one-to-one relation between both descriptions of a tessellation, and also the $\sigma$-algebras which are used for them can be related appropriately.
Let ${\mathbb T}$ denote the set of all tessellations of ${\mathbb R}^d$. By $y\in {\mathbb T}$ we mean the closed set of cell boundaries of the tessellation $y$. Then ${\mathbb T}$ can be endowed with the Borel $\sigma$-algebra ${\cal B}({\mathbb T})$ of the topology of closed convergence.
A random tessellation $Y$ is a random variable with values in $({\mathbb T}, {\cal B}({\mathbb T}))$.  For $W\in \mathfrak{P}$, the set of tessellations restricted to $W$ is denoted ${\mathbb T}\wedge W$. In particular, for $y\in {\mathbb T}$ we have $y\cap W\in {\mathbb T}\wedge W$, and the boundary of $W$ does not belong to this restricted tessellation. Here $W$ is referred to as a window.  

By  $\stackrel{D}{=}$ we denote the identity of the distributions of random variables. Our investigation of consistency of random tessellations will be based on the following proposition. In \cite{SW}, Theorem 2.3.1, a more general form is given; we specify it here for ${\mathbb R}^d$.

\begin{theorem}\label{conssw} (Schneider and  Weil)  Let $(Z_i: i \in \mathbb{N})$ be a sequence of random closed sets in ${\mathbb R}^d$, and  $(G_i: i \in \mathbb{N})$ a sequence of open, bounded sets  with $cl(G_i) \subset G_{i+1}$ for $i \in \mathbb{N}$ and $\bigcup_{i=1}^\infty G_i = {\mathbb R}^d$. If  $Z_m \cap cl(G_i) \stackrel{D}{=} Z_i$ for all $m > i$, then there exists a random closed set $Z$ in ${\mathbb R}^d$ with $$Z \cap cl(G_i) \stackrel{D}{=} Z_i$$ for all $i \in \mathbb{N}$. 
\end{theorem}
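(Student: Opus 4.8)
This is a Kolmogorov-type consistency (projective limit) result for random closed sets, and the plan is to obtain the limiting random closed set $Z$ from its Choquet capacity functional. Recall that, in the topology of closed convergence, the space $\mathcal{F}$ of closed subsets of $\mathbb{R}^d$ is compact and metrizable, that a random closed set is uniquely determined by its capacity functional $K\mapsto P(\,\cdot\,\cap K\neq\emptyset)$ on the compact sets, and that, conversely, every functional on the compact sets obeying Choquet's axioms (namely $T(\emptyset)=0$, $0\le T\le 1$, the complete-alternation inequalities, and upper semicontinuity along decreasing sequences of compacta) is the capacity functional of some random closed set; this is the companion existence statement to \cite{SW}, Theorem~2.3.1, due to Choquet and Matheron. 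A preliminary remark: taking $m=i+1$ in the hypothesis gives $Z_i\stackrel{D}{=}Z_{i+1}\cap cl(G_i)\subset cl(G_i)$, so one may and does assume $Z_i\subset cl(G_i)$ almost surely for every $i$.

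The construction would then proceed in three steps. First, I would define a global functional $T$ on the compact subsets of $\mathbb{R}^d$: given compact $K$, since $(G_i)$ is an increasing open cover of $\mathbb{R}^d$ there is an index $i$ with $K\subset G_i\subset cl(G_i)$, and one sets $T(K):=P(Z_i\cap K\neq\emptyset)$. This is independent of the choice of $i$: if $K\subset cl(G_i)\subset cl(G_j)$ with $i<j$, then, using $Z_j\cap cl(G_i)\stackrel{D}{=}Z_i$ together with $K\subset cl(G_i)$, one gets $P(Z_i\cap K\neq\emptyset)=P\bigl((Z_j\cap cl(G_i))\cap K\neq\emptyset\bigr)=P(Z_j\cap K\neq\emptyset)$. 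Second, I would check that $T$ satisfies Choquet's axioms; the mechanism is locality: any finite family of compact sets, and any decreasing sequence of compact sets, is contained in a single $cl(G_i)$ (the union, respectively the first member, being compact, hence covered by some $G_i$), and on compacts contained in $cl(G_i)$ the functional $T$ coincides with the genuine capacity functional of $Z_i$; hence the complete-alternation inequalities and the upper semicontinuity pass from $Z_i$ to $T$, while $T(\emptyset)=0$ and $0\le T\le1$ are immediate. By the existence part of Choquet's theorem there is then a random closed set $Z$ in $\mathbb{R}^d$ with capacity functional $T$. Third, one identifies the restrictions: for a fixed $i$ and an arbitrary compact $K$,
\begin{align*}
P\bigl((Z\cap cl(G_i))\cap K\neq\emptyset\bigr) &= P\bigl(Z\cap(cl(G_i)\cap K)\neq\emptyset\bigr)=T(cl(G_i)\cap K)\\
&= P\bigl(Z_i\cap(cl(G_i)\cap K)\neq\emptyset\bigr)=P(Z_i\cap K\neq\emptyset),
\end{align*}
where the step to $T(cl(G_i)\cap K)$ uses that $Z$ has capacity functional $T$, the next step uses the definition of $T$ and that $cl(G_i)\cap K$ is a compact subset of $cl(G_i)$, and the last step uses $Z_i\subset cl(G_i)$ almost surely. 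Thus $Z\cap cl(G_i)$ and $Z_i$ have the same capacity functional, so $Z\cap cl(G_i)\stackrel{D}{=}Z_i$ for every $i$, which is the assertion.

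I expect the main, and essentially the only genuinely non-elementary, obstacle to be contained in the existence half of Choquet's theorem — reconstructing a random closed set from a prescribed capacity functional; the remainder is the bookkeeping of verifying that the local data glue consistently. If one prefers to avoid Choquet's theorem, an alternative is a Kolmogorov-extension argument on the countable product $\prod_i\mathcal{F}(cl(G_i))$ of the compact metrizable hyperspaces of closed subsets of the $cl(G_i)$: the restriction maps $F\mapsto F\cap cl(G_i)$ are measurable, the laws of the tuples $\bigl(Z_m\cap cl(G_{i_1}),\dots,Z_m\cap cl(G_{i_k})\bigr)$ for $m\ge i_k$ are independent of $m$ and form a consistent family of finite-dimensional distributions, the resulting coordinate process $(\tilde Z_i)$ satisfies $\tilde Z_j\cap cl(G_i)=\tilde Z_i$ almost surely simultaneously for all $i<j$, and then $Z:=\bigcup_i\tilde Z_i$ is closed — it already coincides with $\tilde Z_k$ inside each $G_k$ — and is the desired random closed set. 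This route merely trades Choquet's theorem for a few extra measurability checks, chiefly that $(F_i)\mapsto cl\bigl(\bigcup_i F_i\bigr)$ is a measurable map on the product.
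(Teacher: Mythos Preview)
The paper does not give its own proof of this theorem; it merely quotes it from Schneider--Weil (\cite{SW}, Theorem~2.3.1) and moves on, so there is nothing in the paper to compare your argument against. Your approach via Choquet's capacity theorem is correct and is in fact the standard one --- it is essentially how Schneider and Weil themselves establish the result --- and the bookkeeping (well-definedness of $T$, locality of the Choquet axioms, identification of the restrictions using $Z_i\subset cl(G_i)$ a.s.) is carried out cleanly. One small wrinkle worth noting: when you write $T(cl(G_i)\cap K)=P(Z_i\cap(cl(G_i)\cap K)\neq\emptyset)$, the compact set $cl(G_i)\cap K$ need not lie in $G_i$ but only in $cl(G_i)\subset G_{i+1}$; your well-definedness argument does cover this case, but strictly speaking you are invoking the extended form $T(K')=P(Z_i\cap K'\neq\emptyset)$ for $K'\subset cl(G_i)$ rather than the initially stated $K'\subset G_i$, so it would be cleaner to phrase the definition that way from the start.
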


This assertion leads us to the following definition. 

\begin{definition}\label{defcons}
A family $(Y(W): W\in \mathfrak{P})$ of random tessellations with  $Y(W)\in {\mathbb T}\wedge W$ is called consistent if and only if for any two windows $V, W \in \mathfrak{P}$ with 
 $V \subset W$ holds
$$Y( V) \stackrel{D}{=} Y( W) \cap V .$$
\end{definition}

Obviously, if $Y$ is a random tessellation of ${\mathbb R}^d$, then $(Y(W):= Y\cap W,\ W\in \mathfrak{P})$ is a consistent family of tessellations. On the other hand, Theorem \ref{conssw} yields that for any consistent family $(Y(W),\ W\in \mathfrak{P})$ exists a random tessellation $Y$ such that $Y\cap W \stackrel{D}{=} Y( W) .$

We will go a step further and study continuous-time processes of tessellations. Therefore, also the consistency of finite-dimensional distributions of these processes have to be considered. 

\begin{definition}\label{defconsproc}
For any $W \in \mathfrak{P}$ let $(Y(t,W), t > 0)$ be a random process of tessellations with values in ${\mathbb T}\wedge W$.
The family of tessellation processes $((Y(t,W), t > 0): W \in \mathfrak{P})$ is called consistent in space if and only if for any two windows $V, W \in \mathfrak{P}$ with 
 $V \subset W$ and for all $0 < t_1 < ... < t_n,\ n \in \mathbb{N}$ holds
  $$(Y(t_1,V), ..., Y(t_n, V)) \stackrel{D}{=} (Y(t_1,W) \cap V, ..., Y(t_n, W) \cap V)  .$$
\end{definition}

Again, if $(Y(t), t > 0)$ is a random process of tessellations of ${\mathbb R}^d$, then $((Y(t,W), t > 0): W \in \mathfrak{P})$ is a consistent family of tessellation processes.
Vice versa, if $((Y(t,W), t > 0): W \in \mathfrak{P})$ is a consistent family of tessellation processes, then for all $0 < t_1 < ... < t_n,\ n \in \mathbb{N}$ exist tessellations $Y(t_1),\ldots , Y(t_n)$ with 
$$(Y(t_1,W), ..., Y(t_n, W)) \stackrel{D}{=} (Y(t_1) \cap W, ..., Y(t_n) \cap W)  $$
for all $ W \in \mathfrak{P}$.
Because  the laws of $(Y(t_1),\ldots , Y(t_n)) $ with $0 < t_1 < ... < t_n,\ n \in \mathbb{N},$ form a projective family of distributions, and the measurable space $({\mathbb T}, {\cal B}({\mathbb T}))$ of tessellations is a Polish space (see \cite{Martinez-Nagel}), Kolmogorov's extension Theorem (see e.g. \cite{Klenke-Englisch}) yields that there is a process $(Y(t),t>0)$ with the respective finite-dimensional distributions.

\section{A class of cell division processes}
Inspired by Cowan's paper \cite{Cowan} we study a certain class of random tessellation processes which are generated by sequential cell division. But we will consider continuous-time processes only.
A cell division process is defined by the distributions of life times (that corresponds to Cowan's selection rule) and by a division rule for the extant cells.

For $h\in \mathcal{H}$  we denote by  $h^+$ and $h^-$  the closed half-spaces of $\mathbb{R}^d$ generated by $h$, with the following definition:
If $u \in {\cal S}^{d-1}_+$ is a vector in the upper half-sphere of ${\mathbb R}^d$ and $a \in {\mathbb R}$ such that $h=\{ x\in {\mathbb R}^d : \langle x,u \rangle = a \}$, then
$ h^+  :=  \{ x\in {\mathbb R}^d : \langle x,u \rangle \geq a \}$   and $ h^-  :=  \{ x\in {\mathbb R}^d : \langle x,u \rangle \leq a \}$. \\

{\em Assumptions:}

\begin{enumerate}
\item[(i)] Let $\lambda: \mathfrak{P} \rightarrow (0, \infty)$ be a function with
\begin{equation}\label{boundlambda}
\forall C\in \mathfrak{P},\,  \exists k_C \geq 1, \,  \forall \, h\in [C] :\,  \lambda (C\cap h^\pm )\leq k_C \lambda (C).
\end{equation}
\item[(ii)] Let  $\{\Lambda_{[C]}: C \in \mathfrak{P}\}$ be a family of probability measures on $({\cal H},{\mathfrak H})$ where $\Lambda_{[C]}$ is concentrated on $[C]$, but $\Lambda_{[C]}$ is not concentrated on a set of hyperplanes which are all parallel to one line (i.e. the directional distribution is not concentrated on a great subsphere, cf. \cite{SW}, Subsection 10.3)
\end{enumerate}

{\em The construction:}\\
Let $W \in \mathfrak{P}$, referred to as a window.
The cell division process $(Y(t,W): t \geq 0)$ with states in ${\mathbb T}\wedge W$  is defined by the following construction.

\begin{enumerate}

\item Let   $\tau_0, \tau_1, ... \sim \mathcal{E}(1)$ be a sequence of i.i.d. random variables which are exponentially distributed with parameter $1$.

\item  $Y(t,W) = \emptyset $ for all $0\leq t<\frac{1}{\lambda(W)}\tau_0$
 
\item  At time $t=\frac{1}{\lambda(W)}\tau_0$ the window $W$ is divided by a random hyperplane $h_0$ with law  $\Lambda_{[W]}$. Two new cells $C_1,\, C_2$ are born with $C_1=W\cap h_0^+$ and  $C_2=W\cap h_0^-$. The state of the process is $Y(t,W)= W\cap h_0$ for $\frac{1}{\lambda(W)}\tau_0 \leq t < \min \{\frac{1}{\lambda(C_1)}\tau_1,\, \frac{1}{\lambda(C_2)}\tau_2 \}$.
 
\item  Any cell $C_i$ which appears during the construction has a life time $\frac{1}{\lambda(C_i)} \tau_i$. At the end of its life time it is divided by a random hyperplane $h_i$ with the law $\Lambda_{[C_i]}$, into the new cells  $C_i \cap h_i^+$ and $C_i \cap h_i^-$, respectively. Always, $h_i$ is assumed to be conditionally independent of all the other dividing hyperplanes, given the  cell $C_i$. Thus the process $Y(t,W)$ jumps into another state exactly at those times when the life time of one of the extant cells elapses and the respective cell is divided.
 
\end{enumerate}

\begin{example}\label{exam}
Some examples for the functional $\lambda$ (corresponding  to Cowan's selection rule) are:

\begin{enumerate}
\item[S1]
$\lambda (C) =W_i (C)$, where $W_i$ denotes the $i$-th intrinsic volume (see e.g. \cite{SW}), $i=0,\ldots , d$. These functionals are monotonically increasing, i.e. 
$$\forall C,\, C'\in \mathfrak{P} : C'\subset C \Rightarrow \lambda (C')\leq  \lambda (C).$$
In particular $W_0(C)=1$ for all  $C\in \mathfrak{P}$ and $W_d$ is the volume.
 
\item[S2]
$\lambda (C) \ldots $ number of vertices of $C\in \mathfrak{P}$. This functional is not monotone in $C$, but condition (\ref{boundlambda}) is satisfied.

\item[S3]
For a given (non-zero) translation invariant and locally finite measure $\Lambda^*$ on $({\cal H},{\mathfrak H})$, chose $\lambda (C)=\Lambda^* ([C])$, $C\in \mathfrak{P}$.
If $\Lambda^*$ is also rotation invariant, then this functional coincides, up to a constant factor, with the intrinsic volume $W_1$.

\end{enumerate}

Some examples for the distributions $\Lambda_{[C]}$, $C\in \mathfrak{P}$ (correspond to Cowan's division rule)  are:

\begin{enumerate}
\item[D1] 
Let $\Lambda$ be a (non-zero) translation invariant and locally finite measure  on $({\cal H},{\mathfrak H})$ that is not concentrated on a set of hyperplanes which are all parallel to one line. For this measure define  $\Lambda_{[C]}(\cdot )=\Lambda (\cdot \cap [C])/ (\Lambda ([C]))$.

\item[D2] Define $\Lambda_{[C]}$  by the following procedure: throw a random point uniformly into $C$, and then choose a random hyperplane through this point with a certain directional distribution. 

\end{enumerate}

\end{example}

\begin{remark}
The homogeneous STIT tessellations as they were first introduced in \cite{Nagel:2005} fit into this scheme, choosing a translation invariant measure $\Lambda$ on $({\cal H},{\mathfrak H})$ and using  $\lambda (C)=\Lambda ([C])$, $\Lambda_{[C]}(\cdot )=\Lambda (\cdot \cap [C])/(\Lambda ([C]))$ for all $C\in \mathfrak{P}$.
\end{remark}

\section{Necessary and sufficient conditions for consistency}

\begin{theorem}\label{theornecess}
Let the family of tessellation processes $((Y(t,W), t > 0): W \in \mathfrak{P})$ be a family of cell division processes determined by  $\lambda$ and $\{\Lambda_{[C]}: C \in \mathfrak{P}\}$ which satisfy assumptions (i) and (ii) above. If this family of processes  is consistent in space then there exists a measure $\nu$  on $[\mathcal{H}, \mathfrak{H}]$ such that  for all $C\in \mathfrak{P}$
\begin{equation}\label{lambdanu}
\lambda (C)= \nu ([C]) 
\end{equation}
and
\begin{equation}\label{lambdagross}
\Lambda_{[C]} (\cdot ) = \frac{1}{\nu([C])} \nu (\cdot \cap [C]).
\end{equation}
\end{theorem}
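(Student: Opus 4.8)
The plan is to extract the measure $\nu$ from the consistency requirement applied to pairs of nested cells that appear already at the first division step. The key observation is this: fix a window $W$ and a subcell $C \in \mathfrak{P}$ with $C \subset W$ and $\mathrm{int}(C) \neq \emptyset$. In the process $Y(\cdot, W)$, consider the event that the first hyperplane $h_0$ separates $C$, i.e. $h_0 \in [C]$; more precisely, look at the event that $C$ lies entirely in one of the two first-born cells, say $C \subset W \cap h_0^-$. On this event, the restriction $Y(t,W) \cap \mathrm{int}(C)$ stays empty until the cell $W\cap h_0^-$ is divided, and the first hyperplane cutting through $\mathrm{int}(C)$ has a law governed by $\lambda$ and $\Lambda_{[\cdot]}$ evaluated along the (random) chain of ancestor cells between $W$ and $C$. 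Consistency forces the law of the \emph{first cut through $C$}, as seen in $Y(\cdot, W)$, to agree with the corresponding object in $Y(\cdot, C)$, where it is simply $\Lambda_{[C]}$ with an exponential$(\lambda(C))$ waiting time. First I would make this "first cut through $C$" random variable precise, using the Markov-type structure of the construction and the fact (Assumption (i)) that the waiting-time rates are locally bounded, so that only finitely many divisions occur in any bounded window in finite time and the first time a hyperplane meets $\mathrm{int}(C)$ is a.s. well defined and positive.

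Second, I would compare windows $W \subset W'$ both containing $C$. Running $Y(\cdot, W')$ and then restricting to $W$ must give (in distribution) $Y(\cdot, W)$; restricting further to $C$ and reading off the first cut shows that the law of the first hyperplane through $\mathrm{int}(C)$ is the same whether computed in $W$ or in $W'$. Taking $W' \supset C$ arbitrarily large, one concludes that for every $C$ there is a fixed sub-probability structure on $[C]$ — call its suitably normalized version $\Lambda_{[C]}$ — but more importantly the \emph{joint} distribution, over an intermediate cell $B$ with $C \subset B$, of (the first hyperplane through $B$, restricted to those that also meet $C$) must be consistent with a single global object. Concretely, I would show that for $C \subset B$, the "rate of cutting through $C$ while sitting inside $B$" equals $\lambda(B)\,\Lambda_{[B]}([C])$, and that consistency across the two-step chain $C \subset B \subset W$ forces the Kolmogorov-type compatibility
\[
\lambda(B)\,\Lambda_{[B]}(A \cap [C]) = \lambda(C)\,\Lambda_{[C]}(A) \cdot \frac{\lambda(B)\,\Lambda_{[B]}([C])}{\lambda(C)}
\]
together with an additivity relation in $\lambda$ along chains. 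The clean way to package all of this: define, for each $C$, the measure $\nu_C(\cdot) := \lambda(C)\,\Lambda_{[C]}(\cdot)$ on $[C]$, and prove from consistency that $\nu_B$ restricted to $[C]$ equals $\nu_C$ whenever $C \subset B$. This is precisely a projective system of finite measures on the hyperplane spaces $[C]$, $C \in \mathfrak{P}$.

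Third, I would upgrade this projective family to a single measure $\nu$ on $[\mathcal{H},\mathfrak{H}]$. Since every hyperplane lies in $[C]$ for some (indeed, arbitrarily large) $C \in \mathfrak{P}$, the sets $[C]$ exhaust $\mathcal{H}$ along an increasing sequence $[C_n]$ with $C_n \uparrow \mathbb{R}^d$, and the consistency $\nu_{C_{n+1}}|_{[C_n]} = \nu_{C_n}$ lets one define $\nu$ on $\bigcup_n [C_n] = \mathcal{H}$ unambiguously; a standard extension argument (local finiteness of each $\nu_{C_n}$ coming from Assumption (i), which gives $\nu_{C_n}([C_n]) = \lambda(C_n) < \infty$) produces a genuine measure $\nu$ on $\mathfrak{H}$. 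Then $\nu([C]) = \nu_C([C]) = \lambda(C)\,\Lambda_{[C]}([C]) = \lambda(C)$, which is (\ref{lambdanu}), and $\nu(\cdot \cap [C]) = \nu_C(\cdot) = \lambda(C)\,\Lambda_{[C]}(\cdot) = \nu([C])\,\Lambda_{[C]}(\cdot)$, which is (\ref{lambdagross}).

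The main obstacle I anticipate is rigorously isolating the "first cut through $C$" inside the larger process $Y(\cdot,W)$ and showing its law depends only on $C$ — not on $W$ and not on the random genealogy of ancestor cells — purely from the spatial consistency of finite-dimensional distributions. The subtlety is that in $Y(\cdot,W)$ the cell $C$ is generally \emph{not} itself a cell of the tessellation at any time; one sees only hyperplane traces inside $\mathrm{int}(C)$. I would handle this by approximating: for the first trace inside $\mathrm{int}(C)$ one can recover the full separating hyperplane $h$ from $h \cap V$ for any $V$ with $C \subset \mathrm{int}(V)$ (a hyperplane is determined by its intersection with an open set it meets), and then letting the observation window shrink toward $C$ while invoking consistency at each stage. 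Care is also needed with the directional non-degeneracy in Assumption (ii): it is what guarantees the relevant events (a given small cell being cut) have positive probability so that the conditional laws used above are well defined, and it rules out pathological $\Lambda_{[C]}$ for which the projective limit could fail to be a locally finite measure on all of $\mathcal{H}$.
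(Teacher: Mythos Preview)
Your overall architecture coincides with the paper's: define $\nu_C(\cdot):=\lambda(C)\,\Lambda_{[C]}(\cdot)$ on $[C]$, establish the compatibility $\nu_W\big|_{[V]}=\nu_V$ for $V\subset W$, and then pass to a global measure $\nu$ along an exhausting sequence $W_n\uparrow\mathbb{R}^d$ (monotone convergence gives $\sigma$-additivity, and independence of the chosen sequence follows from the compatibility itself). From $\nu$ the identities (\ref{lambdanu}) and (\ref{lambdagross}) drop out exactly as you say.

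Where you diverge from the paper, and make life much harder for yourself, is in the proof of the compatibility relation. The paper obtains $\lambda(V)\Lambda_{[V]}(H)=\lambda(W)\Lambda_{[W]}(H)$ for all $H\in\mathfrak{H}$ with $H\subset[V]$ by a single infinitesimal computation: for Borel $B\subset V$,
\[
\mathbb{P}\bigl(Y(V,\Delta t)\cap B\neq\emptyset\bigr)=\lambda(V)\,\Lambda_{[V]}([B])\,\Delta t+o(\Delta t),
\]
because with probability $1-o(\Delta t)$ at most one division has occurred by time $\Delta t$ (assumption (i) is used only here, to bound the rate of the second division). The same expansion holds with $W$ in place of $V$; consistency equates the two hitting probabilities since $B\subset V$; dividing by $\Delta t$ and sending $\Delta t\to 0$ gives $\lambda(V)\Lambda_{[V]}([B])=\lambda(W)\Lambda_{[W]}([B])$. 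A $\pi$-system argument (the sets $[V]\setminus[B]$ with $B\subset V$ open generate $\mathfrak{H}\cap[V]$) extends this to all $H\subset[V]$.

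This completely dissolves what you flag as ``the main obstacle''. You never need to isolate the first cut through $C$ inside $Y(\cdot,W)$, never need to condition on a random ancestor chain, and never need to reconstruct a hyperplane from its trace in $C$: at order $\Delta t$ the only relevant event is the very first division of the window, and the quantity $\mathbb{P}(Y(W,\Delta t)\cap B\neq\emptyset)$ is already a functional of the restriction to $V$. Note also that your displayed compatibility identity in the second paragraph is strictly weaker than what you need: it says only that $\Lambda_{[B]}(\cdot\mid[C])=\Lambda_{[C]}(\cdot)$, not that $\lambda(B)\Lambda_{[B]}([C])=\lambda(C)$, so by itself it does not give $\nu_B|_{[C]}=\nu_C$; the infinitesimal argument delivers both pieces at once. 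Assumption (ii) is not used anywhere in this step.
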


Now we consider consistent families of cell division processes which yield homogeneous (i.e. spatially stationary) tessellations in ${\mathbb R}^d$. It was already known that homogeneous STIT tessellation processes are consistent. The following theorem states that STIT are the only consistent cell division processes.

\begin{theorem}\label{theorSTITonly}
Let the family of tessellation processes $((Y(t,W), t > 0): W \in \mathfrak{P})$ be a family of cell division processes determined by  $\lambda$ and $\{\Lambda_{[C]}: C \in \mathfrak{P}\}$ which satisfy assumptions (i) and (ii) above. This family of processes is consistent in space and all $Y_t$,  $t>0$, are homogeneous (spatially stationary), if and only if this process has the same distribution as the homogeneous STIT process driven by the hyperplane measure $\nu$ given in Theorem \ref{theornecess}.
\end{theorem}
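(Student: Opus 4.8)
The plan is to prove the two implications separately; the ``if'' direction is essentially a matter of citation. If the family of processes is, in distribution, the homogeneous STIT process driven by a locally finite, translation invariant measure $\nu$ that is not concentrated on a great subsphere, then the associated $\lambda(C)=\nu([C])$ and $\Lambda_{[C]}(\cdot)=\nu(\cdot\cap[C])/\nu([C])$ satisfy Assumption~(i) (with $k_C=1$, since $[C\cap h^\pm]\subseteq[C]$) and Assumption~(ii); the consistency in space of this process is precisely the result proved in \cite{Nagel:2005}, and the stationarity of each $Y_t$ is established there as well.

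For the ``only if'' direction, assume the family $((Y(t,W),t>0):W\in\mathfrak P)$ is consistent in space and every $Y_t$ is homogeneous. First I would apply Theorem~\ref{theornecess}: consistency yields a measure $\nu$ on $[\mathcal H,\mathfrak H]$ with $\lambda(C)=\nu([C])$ and $\Lambda_{[C]}(\cdot)=\nu(\cdot\cap[C])/\nu([C])$ for all $C\in\mathfrak P$, so $\lambda$ and $\{\Lambda_{[C]}\}$ already have the STIT form. This $\nu$ is a legitimate driving measure: it is locally finite, because every compact $K\subset\mathbb R^d$ lies in some $C\in\mathfrak P$ and hence $\nu([K])\le\nu([C])=\lambda(C)<\infty$, and it is not concentrated on a great subsphere by Assumption~(ii). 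Granting for the moment that $\nu$ is translation invariant, the construction of $(Y(t,W):t\ge0)$ from $\lambda$ and $\{\Lambda_{[C]}\}$ is then verbatim the construction of the homogeneous STIT process in the window $W$ driven by $\nu$ (cf.\ the Remark following Example~\ref{exam}); thus the finite-dimensional distributions coincide for every window $W$, and, since both families are consistent, the extension argument of Section~2 shows that the processes agree in distribution. So everything reduces to proving translation invariance of $\nu$.

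For that I would use a first-order, small-time computation. Fix a polytope $V$ and a set $A\in\mathfrak H$ with $A\subseteq[V]$, and let $E^V_A(t)$ be the (measurable) event that $Y(t,V)$ contains a relatively open $(d-1)$-dimensional face lying in some hyperplane $h\in A$. In the window construction in $V$ the first cell division occurs at an exponential time of rate $\nu([V])$ and splits $V$ by a hyperplane with law $\nu(\cdot\cap[V])/\nu([V])$; by Assumption~(i) the birth rates of the two daughter cells are at most $k_V\nu([V])$ whatever the first splitting hyperplane is, so the probability that two or more divisions occur before time $t$ is $O(t^2)$ uniformly. Hence $P(E^V_A(t))=\nu(A)\,t+o(t)$, i.e.\ $\lim_{t\to0}t^{-1}P(E^V_A(t))=\nu(A)$. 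Now fix $x\in\mathbb R^d$. By consistency, $Y(t,V)\stackrel{D}{=}Y(t)\cap V$ and $Y(t,V-x)\stackrel{D}{=}Y(t)\cap(V-x)$ for the global tessellation $Y(t)$ of Section~2, and the homogeneity $Y(t)\stackrel{D}{=}Y(t)+x$ turns the event ``$Y(t)\cap V$ contains a face lying in some $h\in A$'' into ``$Y(t)\cap(V-x)$ contains a face lying in some $h\in A-x$'' (note that $[V-x]=[V]-x$, so $A-x\subseteq[V-x]$ holds automatically). Therefore $P(E^V_A(t))=P(E^{V-x}_{A-x}(t))$ for every $t>0$; dividing by $t$ and letting $t\to0$ gives $\nu(A)=\nu(A-x)$. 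Since this holds for all $A\in\mathfrak H$ with $A\subseteq[V]$, all $V\in\mathfrak P$ and all $x$, and since $\{A\in\mathfrak H:A\subseteq[C]\text{ for some }C\in\mathfrak P\}$ is a $\cap$-stable system generating $\mathfrak H$, covering $\mathcal H$, on which $\nu$ is $\sigma$-finite, the uniqueness theorem for measures gives $\nu=\nu(\cdot-x)$. Thus $\nu$ is translation invariant, and by the previous paragraph the process is the homogeneous STIT process driven by $\nu$.

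The step I expect to be the main obstacle is the first-order asymptotics $P(E^V_A(t))=\nu(A)t+o(t)$: besides checking measurability of $E^V_A(t)$ on $\mathbb T\wedge V$, one must use Assumption~(i) to ensure that histories with at least two divisions before time $t$ contribute only $o(t)$ uniformly in the random first splitting hyperplane, which itself affects the birth rates of the two daughter cells. The remaining ingredients — identifying the cell-division construction with the STIT construction once Theorem~\ref{theornecess} has supplied $\nu$, and the measure-uniqueness argument — are routine.
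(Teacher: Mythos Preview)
Your proof is correct and follows essentially the same strategy as the paper: invoke Theorem~\ref{theornecess} to obtain $\nu$, then deduce translation invariance of $\nu$ from a short-time asymptotic computation combined with the homogeneity of $Y(t)$. The paper's version is a little more economical---it reuses the hitting event $\{Y(W,\Delta t)\cap B\neq\emptyset\}$ already analyzed in Lemma~\ref{lemma_fundam} and works in a single window $W\supset B\cup(B+x)$, rather than introducing your face-event $E^V_A$ and the pair of windows $V,\,V-x$---but the idea is the same.
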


\begin{remark}
It is easily seen by examples that the choices of $\lambda$ and $\{\Lambda_{[C]}: C \in \mathfrak{P}\}$ mentioned in the Example \ref{exam} and different from S3, D1 do not fulfill the necessary conditions for consistency.
\end{remark}

\section{Proofs}

\begin{lemma}\label{lemma_fundam}
Fix  $\lambda$ and $\{\Lambda_{[C]}: C \in \mathfrak{P}\}$ which satisfy (i) and (ii) above.  Let be $V, W \in \mathfrak{P}$ and $V \subset W$. If 
for all $t>0$ 
\begin{equation}\label{eq: vorcons}
Y(V,t)\stackrel{D}{=}Y(W,t)\cap V
\end{equation}
then for all $H \in \mathfrak{H}$, $H \subset [V]$

\begin{equation}\label{eq: Fundamentalgleichung-der-Konsistenz}
\lambda(V) \Lambda_{[V]}(H) = \lambda(W) \Lambda_{[W]}(H)
\end{equation}

\end{lemma}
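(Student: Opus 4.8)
The plan is to extract the identity \eqref{eq: Fundamentalgleichung-der-Konsistenz} from the distributional equality \eqref{eq: vorcons} by looking at the behaviour of both processes at the very first division time and letting $t\to 0$. The key observation is that for small $t$ the tessellation $Y(W,t)$, restricted to $V$, is empty with high probability, carries exactly one hyperplane piece $h_0\cap V$ with a probability of order $t$, and carries two or more pieces only with probability $o(t)$; the same holds for $Y(V,t)$. Comparing the order-$t$ terms of the two sides will give the claim.

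First I would compute, for a test event of the form ``the restricted tessellation hits $V$ in a configuration determined by a hyperplane set $H\subset[V]$,'' the probability that $Y(V,t)$ already consists of a single hyperplane lying in $H$. Since the first (and so far only) division of $V$ happens at time $\frac{1}{\lambda(V)}\tau_0$ with $\tau_0\sim\mathcal E(1)$, this probability is $\bigl(1-e^{-\lambda(V)t}\bigr)\,\Lambda_{[V]}(H)$ minus a correction for the event that a second division has already occurred; by Assumption (i), equation \eqref{boundlambda}, the division rates of the two daughter cells are bounded by $k_V\lambda(V)$, so that correction is $O(t^2)$. Hence
\[
P\bigl(Y(V,t)\ \text{is a single hyperplane in}\ H\bigr) = \lambda(V)\,\Lambda_{[V]}(H)\,t + O(t^2).
\]
Next I would do the analogous computation for $Y(W,t)\cap V$. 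Here I must be careful: a division of $W$ by a hyperplane $h_0$ contributes to $Y(W,t)\cap V$ only when $h_0\in[V]$, and if $h_0\in[W]\setminus[V]$ the window $V$ stays untouched but a daughter cell $C$ with $V\subset C\subsetneq W$ is created, which can itself later be divided. Using \eqref{boundlambda} to bound all the relevant cell-division rates uniformly (by $k_W\lambda(W)$, say) over the finitely many cells that can appear before time $t$, every scenario in which $V$ is cut but which involves two or more cuts, or a cut after an intermediate subdivision, again contributes only $O(t^2)$. The leading term comes solely from: $W$ is divided exactly once, before time $t$, by a hyperplane in $H$. This gives
\[
P\bigl(Y(W,t)\cap V\ \text{is a single hyperplane in}\ H\bigr) = \lambda(W)\,\Lambda_{[W]}(H)\,t + O(t^2).
\]
Equating the two expressions via \eqref{eq: vorcons}, dividing by $t$, and letting $t\downarrow 0$ yields \eqref{eq: Fundamentalgleichung-der-Konsistenz}.

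The step I expect to be the main obstacle is the second computation, i.e. controlling the restriction $Y(W,t)\cap V$: one has to argue that, although arbitrarily many cells of $W$ not meeting $V$ could in principle be created in time $t$, the probability of \emph{any} cell meeting $V$ being divided more than once, or being divided after an ancestor not meeting $V$ was divided, is genuinely $O(t^2)$ uniformly. This requires a uniform upper bound on the relevant division rates along every possible genealogy of cells containing $V$, which is exactly what the boundedness hypothesis \eqref{boundlambda} of Assumption (i) delivers: any cell $C$ with $V\subset C\subset W$ has $\lambda(C)\le k_W\lambda(W)$ (iterating \eqref{boundlambda} if $C$ arises from several cuts, or more simply bounding $\lambda$ on the finite set of such cells). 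One must also make sure the event ``$Y(\cdot,t)\cap V$ is a single hyperplane in $H$'' is measurable and that the stated $o(t)$ estimates are uniform in $H$, so that the limit identity holds for every $H\in\mathfrak H$ with $H\subset[V]$; both are routine once the rate bound is in hand.
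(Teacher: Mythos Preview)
Your plan is correct and rests on the same core idea as the paper: linearise in $t$ by computing the probability of a suitable test event to order $t$ in each window, equate via \eqref{eq: vorcons}, and let $t\downarrow 0$. The difference lies in the choice of test event. The paper uses the capacity functional $\mathbb{P}(Y(\cdot,\Delta t)\cap B\neq\emptyset)$ for Borel sets $B\subset V$, which at leading order gives $\lambda(V)\Lambda_{[V]}([B])=\lambda(W)\Lambda_{[W]}([B])$ only for sets of the form $[B]$; it then passes to complements $[V]\setminus[B]$ and invokes that these form a $\cap$-stable generator of $\mathfrak H\cap[V]$ to conclude \eqref{eq: Fundamentalgleichung-der-Konsistenz} for arbitrary $H$. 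Your route instead reads off the law of the first dividing hyperplane directly, via the event ``the restriction to $V$ consists of a single hyperplane piece whose affine hull lies in $H$,'' which gives the identity for all $H\subset[V]$ in one stroke and avoids the generating-system step. The price is the measurability check you flag (that the affine hull of the unique facet is a measurable function of the tessellation), which is indeed routine. Your worry about iterated cuts in the $W$-computation is slightly overcautious: since any scenario other than ``exactly one division of $W$, by a hyperplane in $H$'' requires at least two divisions of $W$, and the second-division rate is bounded by $2k_W\lambda(W)$ via \eqref{boundlambda}, the total contribution of all such scenarios is $O(t^2)$ without needing to iterate the bound or control deeper generations; the paper handles the $W$-side with the single word ``Analogously'' for this reason.
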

\begin{proof}
Consider polytopes $V, W \in \mathfrak{P}$ with $V \subset W$ and  a Borel set $B \subset V$.\\
For the window $V$  the life time until the first division is $\frac{1}{\lambda(V)}\tau_0$. This division generates two new cells, $C_1, C_2$, say. Condition (\ref{boundlambda}) ensures that the waiting time until the next division in $V$ (i.e. a division of $C_1$ or $C_2$ respectively) is
$$
\min \{ \frac{1}{\lambda(C_1)}\tau_1, \frac{1}{\lambda(C_2)}\tau_2 \} \geq \min \{ \frac{1}{k_V\lambda(V)}\tau_1, \frac{1}{k_V\lambda(V)}\tau_2 \} 
= \frac{1}{k_V\lambda(V)} \min \{ \tau_1, \tau_2 \}, 
$$
i.e. it is greater or equal than an exponentially distributed random variable with parameter $2 k_V \lambda(V)$.

Hence, the time of the second division is greater or equal to the sum of two independent exponentially distributed random variables, the first one with parameter $\lambda(V)$ and the second one with parameter $2 k_V \lambda(V)$.

Then the properties of the exponential distribution yield for the construction within the window $V$ that for small $\Delta t >0$ up to a probability $o(\Delta t)$ not more than one division of $V$ takes place in the time interval $(0,\Delta t)$. Note the the event that $V$ is divided exactly once until $\Delta t$ can also be written as $Y(V,\Delta t)=h_0\cap V$. Hence

$$\begin{array}{rl}
  & \mathbb{P}(Y(V,\Delta t) \cap B \not= \emptyset)\\ &\\
= & \mathbb{P}(Y(V,\Delta t) \cap B \not= \emptyset ,\, Y(V,\Delta t)=h_0\cap V)\\ &\\
  &  + \mathbb{P}(Y(V,\Delta t) \cap B \not= \emptyset ,\, Y(V,\Delta t) \mbox{ contains more than two cells }) \\ &\\
= & \mathbb{P}(Y(V,\Delta t) \cap B \not= \emptyset ,\, Y(V,\Delta t)=h_0\cap V )  +   o(\Delta t) \\ &\\
= & \mathbb{P}(Y(V,\Delta t) \cap B \not= \emptyset  | Y(V,\Delta t)=h_0\cap V )\cdot P(Y(V,\Delta t)=h_0\cap V) +   o(\Delta t) \\ &\\
= & \Lambda_{[V]}([B]) \cdot (\lambda(V) \Delta t + o(\Delta t)) +  o(\Delta t)  \\ &\\
= & \Lambda_{[V]}([B]) \, \lambda(V)\, \Delta t + o(\Delta t)
\end{array}
$$

Analogously, for the same set $B$ we obtain for the construction in $W$
$$
   \mathbb{P}(Y(W,\Delta t) \cap B \not= \emptyset )
=  \Lambda_{[W]}([B])  \, \lambda(W) \, \Delta t + o(\Delta t).
$$ 

The identity (\ref{eq: vorcons}) implies that for $B\subset V$ and $\Delta t>0$ holds
$$
\mathbb{P}(Y(V,\Delta t) \cap B \not= \emptyset ) = \mathbb{P}(Y(W,\Delta t)\cap V \cap B \not= \emptyset ) = \mathbb{P}(Y(W,\Delta t) \cap B \not= \emptyset ) .
$$

Consequently, for the limits
$$\begin{array}{rcl}
    \Lambda_{[V]}([B] )\lambda(V)  
 &=&  \lim_{\Delta t \rightarrow 0}\frac{\mathbb{P}(Y(V,\Delta t) \cap B \not= \emptyset )}{\Delta t} 
 =  \lim_{\Delta t \rightarrow 0}\frac{\mathbb{P}(Y(W,\Delta t) \cap B \not= \emptyset )}{\Delta t}\\ &&\\ 
 &=&  \Lambda_{[W]}([B] )\lambda(W).
 \end{array}
$$
for all Borel sets $B \subset V \subset W$.\\

As the $\Lambda_{[V]}$ and $\Lambda_{[W]}$ are probability measures and $\Lambda_{[V]}([V])=1$,
$$\begin{array}{rl} & \lambda(V) \Lambda_{[V]}([V] \setminus [B])\\&\\
= & \lambda(V) (1 - \Lambda_{[V]}([B]))\\&\\
= & \lambda(V) - \lambda(V)\Lambda_{[V]}([B])\\&\\
= & \lambda(W) \Lambda_{[W]}([V]) - \lambda(W)\Lambda_{[W]}([B])\\&\\
= & \lambda(W) \Lambda_{[W]}([V] \setminus [B]).
\end{array}$$
Note that for $B_1,B_2\subset V$ holds
$([V] \setminus [B_1]) \cap ([V] \setminus [B_2]) = [V] \setminus ([B_1] \cup [B_2])$, and therefore
the set $\mathcal{D} = \{[V] \setminus [B]: B \subset V, B \textrm{ open}\}$ is a $\cap$-stable generating system for $\mathfrak{H} \cap [V]$, see  \cite[Lemma 2.1.1]{SW}
Thus, because both the measures $\lambda(V) \Lambda_{[V]}([V])$ and $\lambda(W) \Lambda_{[W]}([V])$ are finite and coincide on $\mathcal{D}$,  they are equal on  $\mathfrak{H} \cap [V]$.
\end{proof}

For a sequence

 $W_1 \subset W_2 \subset ...$ with all $W_n \in \mathfrak{P}$ and $\bigcup_{n=1}^\infty W_n = \mathbb{R}^d$
 we define a set function $\nu$ on $[\mathcal{H}, \mathfrak{H}]$ by 
\begin{equation}\label{eq: Definition-nu}
\nu(H) = \lim_{n \rightarrow \infty} \lambda(W_n) \Lambda_{[W_n]} (H \cap [W_n]), \quad \forall H \in \mathfrak{H}.
\end{equation}

\begin{lemma} The function
$\nu$ defined by (\ref{eq: Definition-nu}) does not depend on the particular choice of the sequence $(W_n, \, n\in {\mathbb N})$, and $\nu$ is a measure on $[\mathcal{H}, \mathfrak{H}]$.
\end{lemma}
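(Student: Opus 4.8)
The plan is to verify the two defining claims—independence of the approximating sequence, and the measure axioms—separately, using Lemma \ref{lemma_fundam} as the workhorse. First I would observe that for a fixed sequence $(W_n)$ the quantities in the defining limit are eventually constant on each fixed set: given $H \in \mathfrak{H}$ and any $W_m \supset W_n$, the set $H \cap [W_n]$ is contained in $[W_n] \cap \mathfrak{H}$, so applying Lemma \ref{lemma_fundam} (which is available because consistency in space gives (\ref{eq: vorcons}) for the pair $W_n \subset W_m$) yields
$$\lambda(W_m)\Lambda_{[W_m]}(H \cap [W_n]) = \lambda(W_n)\Lambda_{[W_n]}(H \cap [W_n]).$$
Since also $\lambda(W_m)\Lambda_{[W_m]}(H \cap [W_m]) \geq \lambda(W_m)\Lambda_{[W_m]}(H \cap [W_n])$, the sequence $n \mapsto \lambda(W_n)\Lambda_{[W_n]}(H \cap [W_n])$ is nondecreasing, hence the limit in (\ref{eq: Definition-nu}) exists in $[0,\infty]$ (and is finite whenever $H \subset [W_N]$ for some $N$, being then eventually constant).

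For independence of the sequence, let $(W_n)$ and $(W_n')$ be two admissible exhausting sequences. Given $n$, since $\bigcup_k W_k' = \mathbb{R}^d$ and $W_n$ is compact (a polytope), there is $k(n)$ with $W_n \subset W_{k(n)}'$, and by the monotonicity argument above applied to the pair $W_n \subset W_{k(n)}'$,
$$\lambda(W_n)\Lambda_{[W_n]}(H \cap [W_n]) = \lambda(W_{k(n)}')\Lambda_{[W_{k(n)}']}(H \cap [W_n]) \leq \lambda(W_{k(n)}')\Lambda_{[W_{k(n)}']}(H \cap [W_{k(n)}']).$$
Taking $n \to \infty$ gives that the limit along $(W_n)$ is $\leq$ the limit along $(W_n')$; by symmetry the two limits agree.

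It remains to check that $\nu$ is a measure on $[\mathcal{H},\mathfrak{H}]$. Nonnegativity and $\nu(\emptyset)=0$ are immediate from (\ref{eq: Definition-nu}). For countable additivity, let $(H_j)$ be pairwise disjoint in $\mathfrak{H}$ with union $H$. For each fixed $n$, $H \mapsto \lambda(W_n)\Lambda_{[W_n]}(H \cap [W_n])$ is a finite measure on $\mathfrak{H}$, so $\lambda(W_n)\Lambda_{[W_n]}(H \cap [W_n]) = \sum_j \lambda(W_n)\Lambda_{[W_n]}(H_j \cap [W_n])$. Because each summand is nondecreasing in $n$ (the monotonicity established above), the monotone convergence theorem for sums lets me interchange the limit $n\to\infty$ with the sum, yielding $\nu(H) = \sum_j \nu(H_j)$. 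I expect the only delicate point to be making precise that Lemma \ref{lemma_fundam} applies to each consecutive pair $W_n \subset W_m$: this needs that consistency in space (Definition \ref{defconsproc}) specializes, via one-dimensional marginals, to the hypothesis (\ref{eq: vorcons}) of the lemma for every such pair—a routine reduction, but the one on which the whole construction rests.
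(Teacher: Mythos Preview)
Your proposal is correct and follows essentially the same route as the paper: establish monotonicity of $n\mapsto \lambda(W_n)\Lambda_{[W_n]}(H\cap[W_n])$ via equation~(\ref{eq: Fundamentalgleichung-der-Konsistenz}), deduce sequence-independence by sandwiching two exhausting sequences against each other, and obtain $\sigma$-additivity by monotone convergence. The only cosmetic difference is that the paper shows monotonicity by explicitly splitting $H\cap[W_{n+1}]$ into $H\cap[W_n]$ and its complement, whereas you invoke the identity $\lambda(W_m)\Lambda_{[W_m]}(H\cap[W_n])=\lambda(W_n)\Lambda_{[W_n]}(H\cap[W_n])$ together with measure monotonicity; these are the same argument.
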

\begin{proof}
Equation  (\ref{eq: Fundamentalgleichung-der-Konsistenz}) and the non-negativity of $\lambda(W_n)$ and $\Lambda_{[W_n]}$ yield
$$\begin{array}{rl} & \lambda(W_{n+1}) \Lambda_{[W_{n+1}]}(H \cap [W_{n+1}])\\&\\
= & \lambda(W_{n+1}) \Lambda_{[W_{n+1}]}(H \cap ([W_{n}] \cup ([W_{n+1}] \setminus [W_n])))\\&\\
= & \lambda(W_{n+1}) \Lambda_{[W_{n+1}]}(H \cap [W_{n}]) +  \lambda(W_{n+1}) \Lambda_{[W_{n+1}]}(H \cap ([W_{n+1}] \setminus [W_n]))\\&\\
= & \lambda(W_n) \Lambda_{[W_n]}(H \cap [W_n]) + \lambda(W_{n+1}) \Lambda_{[W_{n+1}]}(H \cap ([W_{n+1}] \setminus [W_n]))\\&\\
\geq & \lambda(W_n) \Lambda_{[W_n]}(H \cap [W_n]),\end{array}$$
i.e., the sequence $\lambda(W_n) \Lambda_{[W_n]} (H \cap [W_n])$ is monotonically increasing. 
Thus, the limit exists with the possibility of the limit being $\infty$.\\
In order to prove that this limit does not depend on the particular choice of the sequence of windows, consider two such monotone sequences $(W_n, \, n\in {\mathbb N})$ and $(W'_n, \, n\in {\mathbb N})$. Then for any $n\in {\mathbb N}$ there is a $m\in {\mathbb N}$ such that $W_n\subset W'_m$ and hence \\
$\lim_{n \rightarrow \infty} \lambda(W_n) \Lambda_{[W_n]} (H \cap [W_n]) \leq  \lim_{n \rightarrow \infty} \lambda(W'_n) \Lambda_{[W'_n]} (H \cap [W'_n])$. Exchanging the roles of $W_n$ and $W'_n$, it is seen that the opposite inequality holds as well, and hence both limits are equal.

Let $H_1, H_2, ... \in \mathfrak{H}$ be pairwise disjoint sets from $\mathfrak{H}$. Then
$$\begin{array}{rl} & \nu(\bigcup_{i=1}^\infty H_i)\\&\\
= & \lim_{n \rightarrow \infty} \lambda(W_n) \Lambda_{[W_n]}\left(\left(\bigcup_{i=1}^\infty H_i\right) \cap [W_n]\right)\\&\\
= & \lim_{n \rightarrow \infty} \lambda(W_n) \Lambda_{[W_n]}\left(\bigcup_{i=1}^\infty \left(H_i \cap [W_n]\right)\right)\\&\\
\stackrel{(a)}{=} & \lim_{n \rightarrow \infty} \lambda(W_n) \sum_{i=1}^\infty \Lambda_{[W_n]}(H_i \cap [W_n])\\&\\
= & \lim_{n \rightarrow \infty} \sum_{i=1}^\infty \lambda(W_n)  \Lambda_{[W_n]}(H_i \cap [W_n])\\&\\
\stackrel{(b)}{=} & \sum_{i=1}^\infty \lim_{n \rightarrow \infty} \lambda(W_n)  \Lambda_{[W_n]}(H_i \cap [W_n])\\&\\
= & \sum_{i=1}^\infty \nu(H_i).\end{array}$$
Here, (a) is correct because the $\Lambda_{[W_n]}$ are measures themselves. Equation (b) is due to the monotone convergence theorem.\\
Thus, the $\sigma$-additivity is proved and hence $\nu$ is a measure on $[\mathcal{H}, \mathfrak{H}]$.
\end{proof}

\begin{corollary}
For all $W \in \mathfrak{P}$ and $H \in \mathfrak{H}$ with $H \subset [W]$ $$\nu(H) = \lambda(W) \Lambda_{[W]}(H).$$
\end{corollary}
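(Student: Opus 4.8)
The plan is to exploit the freedom, already established in the preceding lemma, in the choice of the exhausting sequence of windows entering the definition (\ref{eq: Definition-nu}) of $\nu$.

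First I would fix $W \in \mathfrak{P}$ and $H \in \mathfrak{H}$ with $H \subset [W]$, and choose an increasing sequence $(W_n,\, n \in \mathbb{N})$ of polytopes with $\bigcup_{n=1}^\infty W_n = \mathbb{R}^d$ and $W \subset W_n$ for every $n$ -- for instance $W_1 = W$ together with an increasing sequence of large cubes containing $W$. By the previous lemma, the limit in (\ref{eq: Definition-nu}) does not depend on the sequence, so $\nu(H)$ may be computed along this particular choice.

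Next, since $H \subset [W] \subset [W_n]$, we have $H \cap [W_n] = H$ for every $n$, so the $n$-th term in (\ref{eq: Definition-nu}) is simply $\lambda(W_n)\Lambda_{[W_n]}(H)$. Applying the fundamental equation of consistency, Lemma \ref{lemma_fundam} (equation (\ref{eq: Fundamentalgleichung-der-Konsistenz})), with the smaller window taken to be $W$ and the larger one $W_n$, and with the set $H \subset [W]$, gives $\lambda(W_n)\Lambda_{[W_n]}(H) = \lambda(W)\Lambda_{[W]}(H)$. Hence the sequence whose limit defines $\nu(H)$ is constant and equal to $\lambda(W)\Lambda_{[W]}(H)$, so passing to the limit yields $\nu(H) = \lambda(W)\Lambda_{[W]}(H)$, as claimed.

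There is essentially no real obstacle here; the only point requiring a little care is the legitimacy of selecting an exhausting sequence of windows all containing the prescribed $W$, which is exactly why the independence-of-the-sequence lemma is invoked at the outset. Alternatively, one could start from an arbitrary exhausting sequence $(W_n)$ and observe that, since $W$ is compact and the $W_n$ increase to $\mathbb{R}^d$, one has $W \subset W_n$ for all sufficiently large $n$, and then run the same argument on that tail.
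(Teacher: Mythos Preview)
Your proof is correct and follows essentially the same route as the paper: use that $W\subset W_n$ (either by choosing the sequence to start at $W$, or---as the paper does and as you mention in your alternative---by passing to the tail of an arbitrary exhausting sequence), then apply (\ref{eq: Fundamentalgleichung-der-Konsistenz}) to see that the terms defining $\nu(H)$ are eventually constant and equal to $\lambda(W)\Lambda_{[W]}(H)$.
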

\begin{proof}
If $W \in \mathfrak{P}$ there is an $n_0$ such that $W \subset W_{n_0}$. Thus $H \subset [W] \subset [W_{n_0}]$ and hence for all $n \geq n_0$ equation (\ref{eq: Fundamentalgleichung-der-Konsistenz})
yields
$$\lambda(W) \Lambda_{[W]}(H) = \lambda(W_n) \Lambda_{[W_n]}(H)= \nu([W]).$$
\end{proof}

\begin{proof} {\em (of Theorem \ref{theornecess})}
Putting $H=[W]$, this corollary and (\ref{eq: Fundamentalgleichung-der-Konsistenz}) immediately yield equations (\ref{lambdanu}) and  (\ref{lambdagross}). 

\end{proof}

\begin{proof} {\em (of Theorem \ref{theorSTITonly})}
According to Theorem \ref{theornecess} the spatial consistency yields the existence of a measure $\nu$ which controls $\lambda$ and $\{\Lambda_{[C]}: C \in \mathfrak{P}\}$. Now it is sufficient to show that the homogeneity (in space) of the $Y(t)$ implies that  $\nu$ is translation invariant. This can be done analogously to the proof of Lemma \ref{lemma_fundam}, choosing a Borel set $B$, a translation vector $x \in \mathbb{R}^d$ and then a window $W$ such that $B,\ B+x \subset W$.
\end{proof}

\bibliographystyle{plain} \bibliography{literatur}

\end{document}